\newtheorem{theorem}{Theorem}[section]
\newtheorem{lemma}{Lemma}[section]
\newtheorem{proposition}{Proposition}[section]
\title{Improved Upper Bound on Brun's Constant Under GRH}
\author{Lachlan Dunn\footnote{\noindent Email: lachlan.dunn1@student.uq.edu.au\\ Affiliation: School of Mathematics and Physics, University of Queensland} }
\date{\today}
\begin{document}

\maketitle

\begin{abstract}
    Brun's constant is the summation of the reciprocals of all twin primes, given by $B=\sum_{p \in P_2}{\left( \frac{1}{p} + \frac{1}{p+2}\right)}$. While rigorous unconditional bounds on $B$ are known, we present the first rigorous bound on Brun's constant under the GRH assumption, yielding $B < 2.1594$.
\end{abstract}

\section{Introduction}
Brun's constant is a fundamental concept in analytic number theory, arising in the study of twin primes—pairs of prime numbers that differ by two. That is, $P_2 := \{\text{prime }p:p+2 \text{ is prime}\}$. Unlike the sum of the reciprocals of all prime numbers, which diverges, the sum of the reciprocals of twin primes converges to a finite value, known as Brun’s constant. Perhaps unsurprisingly, this result was first established in 1919 by Brun \cite{Brun1919}. 

Brun’s proof, however, did not yield a numerical upper bound for $B$. The first such bound, $B < 2.347$, was established by Crandall and Pomerance \cite{Crandall2006}. Since then, improvements have been made, with the current best unconditional bound of $B < 2.28851$ being obtained by Platt and Trudgian \cite{Platt2020}. Nicely conjectured that $B = 1.902160583209 \pm 0.000000000781$ \cite{Nicely2010}, demonstrating the scope of potential improvements to be made.

One approach to refining these bounds is to assume the Generalized Riemann Hypothesis (GRH). This method was employed by Klyve to derive the bound $B < 2.1754$ \cite{DominicKlyve2007}, and it will be the approach taken here. Although our improvement is small, Klyve never published his result, and utilized numerical integration to calculate his subsequent bound on $B$. To our knowledge, our work represents the first mathematically rigorous upper bound on $B$ assuming GRH.

\begin{theorem}
Assume GRH. Then, $B < 2.1594$.
\end{theorem}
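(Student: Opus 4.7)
The plan is to decompose Brun's constant at a threshold $x$ large enough for direct enumeration of twin primes to be tractable, writing
\[
B = \underbrace{\sum_{\substack{p \in P_2 \\ p \leq x}} \left(\frac{1}{p} + \frac{1}{p+2}\right)}_{B_1(x)} + \underbrace{\sum_{\substack{p \in P_2 \\ p > x}} \left(\frac{1}{p} + \frac{1}{p+2}\right)}_{B_2(x)}.
\]
The head $B_1(x)$ can be evaluated rigorously using tabulated twin-prime computations (for example, Oliveira e Silva's enumeration, which extends past $10^{18}$). Since $B_1(x)$ is a finite sum of rationals, this step is exact up to whatever decimal precision is propagated.

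The tail $B_2(x)$ is where the analytic work, and the assumption of GRH, come in. I would aim for an explicit upper bound of the shape
\[
\pi_2(y) := |\{p \leq y : p \in P_2\}| \leq \frac{K(y)\, y}{(\log y)^2}
\]
valid for all $y \geq x$, via Selberg's upper-bound sieve applied to the sifting set $\{n(n+2) : n \leq y\}$ with sieve parameter $z$ optimised near $y^{1/2}$. Evaluating the Selberg quadratic form produces a main term controlled by a sum $G(z)$ over squarefree integers that behaves asymptotically like $(\log z)^2/(2 C_2)$, with $C_2$ the twin-prime constant, together with a remainder $\sum_{d \leq z^2} |r_d|$. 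Under GRH we have explicit power-saving versions of Mertens-type sums such as $\sum_{p \leq w}(\log p)/p$ and of $\pi(w)$ itself (for instance via Schoenfeld's bound), which sharpen both the asymptotic for $G(z)$ and the remainder estimate, yielding a numerically better $K(y)$ than the unconditional version.

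Once the pointwise bound on $\pi_2(y)$ is in hand, partial summation gives
\[
B_2(x) \leq 2 \sum_{\substack{p \in P_2 \\ p > x}} \frac{1}{p} \leq \frac{2\pi_2(x)}{x} + 2\int_x^\infty \frac{K(y)}{y(\log y)^2}\, dy,
\]
after absorbing the small correction between $1/(p+2)$ and $1/p$. The right-hand integral reduces to an elementary antiderivative in $1/\log y$ which can be rigorously majorised using interval arithmetic. The proof then concludes by selecting $x$ (and any free sieve parameters) so that $B_1(x)$ plus the bound on $B_2(x)$ falls below $2.1609$.

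The main obstacle will be pushing the explicit sieve bound on $\pi_2(y)$ tight enough that, combined with a feasible $x$, the tail integral clears Klyve's $2.1754$ with room to spare. Balancing the tension between enlarging $x$ (which costs computation) and the rate at which $K(y)$ converges toward its asymptotic limit is the real content of the argument, and is precisely where explicit GRH-based estimates for sums over primes are expected to deliver the quantitative gain over the unconditional Platt--Trudgian bound.
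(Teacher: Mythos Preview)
Your overall architecture---split at a computational threshold, evaluate the head by enumeration, bound the tail via Selberg's sieve plus partial summation---matches the paper. But the proposal misidentifies where GRH actually does its work, and as written the sieve step would not beat the unconditional bound.

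You propose sieving the set $\{n(n+2) : n \leq y\}$. In that setup the level sets are $\{n \leq y : d \mid n(n+2)\}$, whose cardinality is $\rho(d)\,y/d + O(\rho(d))$ by completely elementary counting; the remainder $\sum_d |r_d|$ involves no prime-counting function at all, so GRH contributes nothing there. Using GRH merely to sharpen the Mertens-type asymptotic for $G(z)$ affects only lower-order constants and cannot move the leading constant in $K(y)$ enough to clear Klyve's $2.1754$, let alone reach $2.1609$.

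The paper (following Klyve) sieves a different set: $\mathcal{A} = \{n \leq x+2 : n \text{ odd},\ n-2 \text{ prime}\}$, for which $|\mathcal{A}_d| = \pi(x;d,-2)$. Now the Selberg remainder terms are exactly errors in the prime number theorem for arithmetic progressions, $E(x;d) = \pi(x;d,-2) - \mathrm{li}(x)/\varphi(d)$, and GRH supplies the explicit uniform bound $|E(x;d)| \leq c_\pi(x)\sqrt{x}\log x$ for all $d < \sqrt{x}$. This forces the sieve level to satisfy $z < x^{1/4}$ (so that $[d_1,d_2] < z^2 < \sqrt{x}$), not $z \approx x^{1/2}$ as you suggest, but the payoff is a remainder of size $r(z)^2 c_\pi(x)\sqrt{x}\log x$ with fully explicit constants. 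The tail is then handled not with a single $K(y)$ but by partitioning $[4\cdot 10^{18},10^{2000}]$ into many subintervals $[m_i,m_{i+1}]$ with a separately optimised $z_i$ on each, and a final Platt--Trudgian tail beyond $m_k$. Your proposal is missing the shifted-prime sieve setup, which is the mechanism through which GRH actually delivers the quantitative gain.
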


To achieve this, we combine previously computed values of $B$ (up to a fixed threshold $x_0$), along with the estimation of the tail end of the sum provided by Selberg’s sieve. This estimation proceeds by computing the intermediate sums
\begin{equation*}
    B(m_i,m_{i+1}):=\sum_{\substack{p \in P_2 \\ m_i<p\leq m_{i+1}}}{\left( \frac{1}{p} + \frac{1}{p+2}\right)}
\end{equation*}
where $m_1 = x_0$ and $i=1,\ldots k$. These sums are then combined to evaluate $B(m_1,\infty)$.

This intermediate sum approach is advantageous due to its flexibility, as different methods of estimating $B$ may yield tighter bounds over different regions. By combining these results, a sharper overall bound can be obtained. To facilitate further improvements, we provide explicit bounds for each $B(m_i,m_{i+1})$.

The structure of this paper is as follows: Section 2 introduces the sieve framework, defines key notation, and derives a bound for the twin prime counting function. Section 3 extends this bound to Brun’s constant using partial summation. Section 4 focuses on parameter optimization and presents the numerical computations. Finally, Section 5 discusses potential refinements and further improvements to this method.

\section{Upper Bound on $\pi_2(x)$}
To calculate $B$, we require an upper bound on the twin prime counting function $\pi_2(x) := \#\{p \text{ prime}: p+2 \text{ also prime}\}$. We derive this bound using Selberg's sieve by sieving on the set of odd numbers $n$ such that $n-2$ is prime. Notably, if $n$ is removed by the sieve (i.e. $n$ is not prime), then $(n-2,n)$ cannot form a twin prime pair.
\subsection{Preliminaries}
For an introduction to Selberg's sieve, see \cite{Halberstam1974}. We will begin by defining below some preliminary notation. Our set of twin prime candidates is given by
\begin{equation*}
    \mathcal{A} = \{n \leq x+2: n\text{ odd with }n-2 \text{ prime}\}
\end{equation*}
and our set of sieving primes by 
\begin{equation*}
    \mathcal{P}=\{p>2:p\text{ prime}\}.
\end{equation*}
We will be sieving using odd primes less than or equal to $z$, and so 
\begin{equation*}
    P_z= \prod_{\substack{p \in\mathcal{P}\\ p\leq z}}{p}.
\end{equation*}
$\mathcal{A}_d$ is the set of twin prime candidates that are divisible by a specific prime $d$, that is
\begin{equation*}
    A_d= |\{n \leq x+2:d|n \text{ and } n-2 \text{ is prime}\}| = \pi(x;d,-2).
\end{equation*}
By the prime number theorem in arithmetic progressions, we will be estimating $\pi(x;d,-2)$ by $\frac{\text{li}(x)}{\varphi(d)}$, and so we define the error in our estimation to be
\begin{equation*}
    E(x;d):= \pi(x;d,-2) - \frac{\text{li}(x)}{\varphi(d)}
\end{equation*}
where
\begin{equation*}
    \text{li}(x) := \int_2^x\frac{dt}{\log{t}}.
\end{equation*}
As we take li to be truncated at 2, it will be useful to define the offset as
\begin{equation*}
    \text{li}_2:=\int_0^2\frac{dt}{\log{t}}.
\end{equation*}
We will use the following notation for the twin prime constant: 
\begin{equation*}
    \pi_2 := \prod_{p>2}\frac{p(p-2)}{(p-1)^2}.
\end{equation*} Following Klyve\footnote{The relevant working is found at page 128.} \cite{DominicKlyve2007}, we take
\begin{equation*}
    V(z) = \sum_{\substack{d \leq z \\ d|P_z}}{\frac{1}{\prod_{p|d}{(p-2)}}}.
\end{equation*}
And finally, we take
\begin{equation*}
    [d_1,d_2]:=\text{lcm}(d_1,d_2).
\end{equation*}
Then, using Theorem 5.7 \cite{DominicKlyve2007} gives us the following result.
\begin{theorem}\mbox{}\\
For any $0<z<x$,
    \begin{equation*}
        \pi_2(x) \leq |\mathcal{A} \setminus \bigcup_{p|P_z} \mathcal{A}_p| = S(\mathcal{A},\mathcal{P}, z) \leq \frac{\textup{li}(x)}{V(z)}+ R(x,z)
    \end{equation*}
where we have defined
    \begin{equation*}
        R(x,z):= \sum_{\substack{d_1, d_2 \leq z \\ d_1,d_2 | P_z}}{|E(x;[d_1,d_2])|}.
    \end{equation*}
\end{theorem}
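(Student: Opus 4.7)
The plan is to derive the theorem as a standard application of Selberg's upper bound sieve to the set $\mathcal{A}$, essentially following the template of Theorem 5.7 in Klyve's thesis \cite{DominicKlyve2007}. The argument splits naturally into the two displayed inequalities, which I would treat in order.

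For the left inequality $\pi_2(x) \leq S(\mathcal{A}, \mathcal{P}, z)$, the observation is that every twin prime pair $(p, p+2)$ with $p \leq x$ and $p+2 > z$ contributes the element $n := p+2$ to $\mathcal{A}$, and this $n$ is coprime to $P_z$ because it is itself a prime exceeding $z$. Thus each such pair survives the sieve, and any pair with $p+2 \leq z$ is already absorbed into the finitely many twin primes computed explicitly below the threshold $x_0$, so the stated inequality holds in the regime that matters for the numerical application.

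For the right inequality I would apply Selberg's $\Lambda^2$-trick: choose real weights $\lambda_d$ with $\lambda_1 = 1$ and $\lambda_d = 0$ whenever $d > z$ or $d \nmid P_z$, and use the pointwise majorization $\mathbf{1}[\gcd(n, P_z) = 1] \leq \bigl(\sum_{d \mid \gcd(n, P_z)} \lambda_d\bigr)^2$. Summing over $n \in \mathcal{A}$ and reordering gives $S(\mathcal{A}, \mathcal{P}, z) \leq \sum_{d_1, d_2} \lambda_{d_1} \lambda_{d_2}\, |\mathcal{A}_{[d_1, d_2]}|$. Substituting $|\mathcal{A}_d| = \textup{li}(x)/\varphi(d) + E(x;d)$ then splits this into a main term $\textup{li}(x)\, Q(\lambda)$ with $Q(\lambda) := \sum_{d_1, d_2} \lambda_{d_1} \lambda_{d_2}/\varphi([d_1, d_2])$, together with an error term bounded in absolute value by $R(x,z)$ once $|\lambda_d| \leq 1$ is known.

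The crux is the quadratic optimization of $Q(\lambda)$. Using $\varphi([d_1,d_2]) = \varphi(d_1)\varphi(d_2)/\varphi((d_1,d_2))$ together with the identity $\varphi(n) = \sum_{e \mid n} h(e)$, where $h$ is the multiplicative function with $h(p) = p - 2$, I would change variables to $y_e := \sum_{d : e \mid d} \lambda_d/\varphi(d)$, diagonalizing $Q$ as $\sum_e h(e) y_e^2$ and converting the constraint $\lambda_1 = 1$ via M\"obius inversion into the single linear constraint $\sum_e \mu(e) y_e = 1$. A Lagrange multiplier computation then forces $y_e^\ast = \mu(e)/\bigl(h(e) V(z)\bigr)$, yielding $\min Q = 1/V(z)$ with $V(z)$ exactly as defined in the paper. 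The main obstacle, which I expect to be the fiddliest bookkeeping, is verifying that the extremal weights satisfy $|\lambda_d^\ast| \leq 1$ so that the error sum is genuinely majorized by $R(x,z)$; this is a classical consequence of Selberg's construction and can be imported from Halberstam--Richert \cite{Halbestam1974} or read off from Klyve's derivation.
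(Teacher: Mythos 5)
The paper does not actually prove this theorem: it is imported verbatim from Theorem~5.7 of Klyve's thesis, so there is no in-paper argument to compare your proposal against. What you have written is the standard Selberg $\Lambda^2$ derivation, and the quadratic-form half is correct and complete: on squarefree arguments $\varphi = 1 * h$ with $h(p) = p-2$, so the change of variables $y_e = \sum_{d : e \mid d} \lambda_d/\varphi(d)$ does diagonalize $Q(\lambda) = \sum_e h(e)y_e^2$; M\"obius inversion converts $\lambda_1 = 1$ into $\sum_e \mu(e)y_e = 1$; the Lagrange minimizer is $y_e^\ast = \mu(e)/\bigl(h(e)V(z)\bigr)$ with $\min Q = 1/V(z)$; and the classical bound $|\lambda_d^\ast| \le 1$ (Halberstam--Richert) majorizes the error sum by $R(x,z)$. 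That all matches what Klyve does.

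The weak point, which you correctly sense but do not correctly resolve, is the left inequality. A twin prime pair $(p,p+2)$ with $p \le x$ contributes $n = p+2 \in \mathcal{A}$, and since $n$ is itself prime, $\gcd(n,P_z) > 1$ exactly when $n \le z$. The sieve therefore establishes $\pi_2(x) - \pi_2(z) \le S(\mathcal{A},\mathcal{P},z)$, not $\pi_2(x) \le S(\mathcal{A},\mathcal{P},z)$: pairs with $p+2 \le z$ are genuinely removed. Your patch---that these are ``absorbed into the finitely many twin primes computed explicitly below $x_0$''---does not hold up, because those same pairs are also counted in $\pi_2(t)$ for every $t > x_0$ inside the partial-summation integral of Lemma~3.1, which is precisely where Theorem~2.1 is applied, so the undercount propagates. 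The honest repair is to carry a $+\pi_2(z)$ term on the right-hand side and show it is harmless: since $z < t^{1/4}$ throughout the application, $\pi_2(z) < z < t^{1/4}$, so the extra contribution to $B$ is at most $2\int_{m_1}^{\infty} t^{1/4-2}\,dt = \tfrac{8}{3}m_1^{-3/4} < 10^{-13}$ for $m_1 = 4\cdot 10^{18}$, well below the precision of the final bound. State that estimate explicitly rather than waving the discrepancy away; as written, you have not proved the inequality you set out to prove.
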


\subsection{Bounding $V^{-1}(z)$}
We need to provide an upper bound on $V^{-1}(z)$ for all $z$. As $z$ increases, computing $V(z)$ directly becomes infeasible, requiring a tradeoff betwen accuracy and efficiency. To address this, we will consider three separate intervals (separated by values $L_1$ and $L_2$), with different bounds on $V(z)$ for each interval. For our calculations, we set $L_1 = 10^8$ and $L_2 = 10^{10}$.\\
For the first interval ($z < L_1$), we will compute $V(z)$ exactly. For the second interval ($L_1 < z \leq L_2$), we will compute $V(z')$ exactly, where $z' \in \{L_1<z\leq L_2: z=k\cdot 10^4, k \in \mathbb{N}\}$ (that is, every $10^4$th number). For the values of $z$ such that $10^4 \nmid z$, we can round $z$ down to the nearest $z'$ to give an upper bound on $V^{-1}(z)$. For the third interval, we will use the approximation of $V(z)$ derived in Section 5.5.2 \cite{DominicKlyve2007}, namely
\begin{equation*}
    V(z) \geq V(L_2) + \frac{\log{z}-\log{L_2}}{2\pi_2} + D(L_2,z)
\end{equation*}
where
\begin{equation*}
    D(L_2,z) = 12.6244\left(\frac{3}{\sqrt{z}} - \frac{9}{\sqrt{L_2}}\right).
\end{equation*}
To summarise,
\begin{lemma} For any $z$, we have\\
$V(z) \geq $
$\begin{cases}
    V(L_2) + \frac{\log{z}-\log{L_2}}{2\pi_2} + D(L_2,z) & z > L_2\\
    V(z') & L_1 <z \leq L_2 \\
    V(z) & z< L_1
\end{cases}$.
\end{lemma}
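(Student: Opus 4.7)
The plan is to verify the three cases of the piecewise bound individually, since the lemma is really a compilation of three distinct observations rather than a single unified argument.

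For the bottom range $z < L_1$, the inequality $V(z) \geq V(z)$ is a tautology and merely records the decision that $V(z)$ will be evaluated exactly in this interval via a direct computation of the defining sum.

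For the middle range $L_1 < z \leq L_2$, the bound rests on the monotonicity of $V$ as a function of $z$. I would argue that in
\begin{equation*}
V(z) = \sum_{\substack{d \leq z \\ d \mid P_z}} \frac{1}{\prod_{p \mid d}(p-2)},
\end{equation*}
every summand is positive, and as $z$ grows both indexing constraints $d \leq z$ and $d \mid P_z$ are weakened (the latter because $P_z$ acquires additional odd prime factors). Thus $V$ is non-decreasing, and if $z' = 10^4 \lfloor z/10^4 \rfloor$ denotes the nearest multiple of $10^4$ below $z$, then $z' \leq z$ gives $V(z) \geq V(z')$, which is exactly the stated bound. The only mild subtlety is that $V$ is actually locally constant on intervals between consecutive prime squarefree products, so one must note that rounding $z$ down can only weaken the bound, never strengthen it.

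For the top range $z > L_2$, I would quote directly the lower estimate derived in Section 5.5.2 of Klyve's thesis \cite{DominicKlyve2007}, which provides $V(z) \geq V(L_2) + (\log z - \log L_2)/(2\pi_2) + D(L_2, z)$ with the specified correction term. Since this is used verbatim, no additional derivation is needed beyond confirming that our chosen $L_2 = 10^{10}$ lies in the regime where Klyve's estimate applies. The hard part, if any, is not the proof of the lemma itself — which is essentially a three-line verification — but rather the subsequent numerical work in Section 4, where $V(z')$ must actually be computed at tens of thousands of points throughout $[L_1, L_2]$ efficiently enough to yield a useful bound.
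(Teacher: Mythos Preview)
Your proposal is correct and matches the paper's approach exactly: the paper does not give a formal proof of this lemma but simply presents it as a summary of the three-regime strategy described in the preceding paragraph, relying on the same monotonicity observation for the middle range and the same citation to Klyve's Section 5.5.2 for the top range. Your write-up just makes explicit the trivial justifications that the paper leaves implicit.
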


\subsection{Bounding $R(x,z)$}
Now, we need an upper bound on $R(x,z)$ for all $x$ and $z$. We will start by proving an inequality that will allow us to bound one of $R(x,z)$'s terms. This inequality holds for any fixed prime $p$, but becomes less useful as the size of $p$ increases relative to the size of $z$. We have excluded some slight optimisations on the order of $z^{1/2}$ for the sake of readability, and the range of permissible values of $z$ can be lowered at a similar cost. For our case, we will be setting $p=2$. 
\begin{proposition}\label{Thm:R}
    Let $p$ be a prime and $z \geq 2768896$. Then, 
    \begin{align*}
        \sum_{\substack{d \leq z \\ \textup{gcd}(p,d)=1}}{\mu^2(d)} \leq &\left(\frac{p}{p+1}\cdot \frac{6}{\pi^2}\right)z + \left( \frac{6}{\pi^2}(1+\frac{1}{\sqrt{p}})+2\right)\sqrt{z} \\&+ \left(3.12 \cdot(1+\frac{1}{p^{1/4}})\right)z^{1/4}.
    \end{align*}
\end{proposition}
\begin{proof}
    To start, note that we can write
    \begin{align*}
        \sum_{\substack{d \leq z \\ \textup{gcd}(p,d)=1}}{\mu^2(d)} &= \sum_{\substack{k \leq \sqrt{z} \\ \textup{gcd}(p,k)=1}}\mu(k)\left[\frac{z}{k^2}\right] - \sum_{\substack{k \leq \sqrt{\frac{z}{p}} \\ \textup{gcd}(p,k)=1}}\mu(k)\left[\frac{z}{pk^2}\right] \\
        &\leq \sum_{\substack{k \leq \sqrt{z} \\ \textup{gcd}(p,k)=1}}\mu(k)\frac{z}{k^2} + \sum_{\substack{k \leq \sqrt{z} \\ \textup{gcd}(p,k)=1}}1 - \sum_{\substack{k \leq \sqrt{\frac{z}{p}} \\ \textup{gcd}(p,k)=1}}\mu(k)\frac{z}{pk^2} \\&\quad+\sum_{\substack{k \leq \sqrt{\frac{z}{p}} \\ \textup{gcd}(p,k)=1}}1\\
        &\leq \sum_{\substack{k \leq \sqrt{z} \\ \textup{gcd}(p,k)=1}}\mu(k)\frac{z}{k^2} - \sum_{\substack{k \leq \sqrt{\frac{z}{p}} \\ \textup{gcd}(p,k)=1}}\mu(k)\frac{z}{pk^2} + 2 \sqrt{z}\\
        &= M(z)z + 2 \sqrt{z},
    \end{align*}
    where we have taken
    \begin{equation*}
        M(z) :=\sum_{\substack{k \leq \sqrt{z} \\ \textup{gcd}(p,k)=1}}\frac{\mu(k)}{k^2} - \sum_{\substack{k \leq \sqrt{\frac{z}{p}} \\ \textup{gcd}(p,k)=1}}\frac{\mu(k)}{pk^2}.
    \end{equation*}
    Also, we have the following, where $p'$ is any prime number.
    \begin{align*}
        M(z) + \sum_{\substack{k > \sqrt{z} \\ \textup{gcd}(p,k)=1}}\frac{\mu(k)}{k^2} - \sum_{\substack{k > \sqrt{\frac{z}{p}} \\ \textup{gcd}(p,k)=1}}\frac{\mu(k)}{pk^2} &= \sum_{\substack{k=1 \\ \textup{gcd}(p,k)=1}}^{\infty}\frac{\mu(k)}{k^2} - \sum_{\substack{k =1 \\ \textup{gcd}(p,k)=1}}^{\infty}\frac{\mu(k)}{pk^2} \\ &=(1-\frac{1}{p})\sum_{\substack{k=1 \\ \textup{gcd}(p,k)=1}}^{\infty}\frac{\mu(k)}{k^2} \\ 
        &=(1-\frac{1}{p})\prod_{p' \neq p}(1-\frac{1}{(p')^2})\\
        &=\frac{(1-\frac{1}{p})}{(1-\frac{1}{p^2})} \cdot \frac{6}{\pi^2} \\
        &=\frac{p}{p+1} \cdot \frac{6}{\pi^2}
    \end{align*}
    Now we can bound the difference as follows.
    \begin{align*}
        \left|\sum_{\substack{k > \sqrt{z} \\ \textup{gcd}(p,k)=1}}\frac{\mu(k)}{k^2} -  \sum_{\substack{k > \sqrt{\frac{z}{p}} \\ \textup{gcd}(p,k)=1}}\frac{\mu(k)}{pk^2} \right| \leq \sum_{k > \sqrt{z}}\frac{\mu^2(k)}{k^2} +  \frac{1}{p}\sum_{k > \sqrt{\frac{z}{p}} }\frac{\mu^2(k)}{k^2}.\\
    \end{align*}
    We now apply explicit bounds on the square-free summation function provided in \cite{Cohen1985}, in conjunction with partial summation, to note that
    \begin{align*}
        \sum_{k > x}\frac{\mu^2(k)}{k^2} \leq \left(-\frac{\frac{6}{\pi^2}x - 1.333\sqrt{x}}{x^2} + 2\int_{x}^{\infty}\frac{\frac{6}{\pi^2}t + 1.333\sqrt{t}}{t^3}dt \right).
    \end{align*}
    Applying this substitution, and simplifying terms, we conclude that
    \begin{align*}
        \left|\sum_{\substack{k > \sqrt{z} \\ \textup{gcd}(p,k)=1}}\frac{\mu(k)}{k^2}-  \sum_{\substack{k > \sqrt{\frac{z}{p}} \\ \textup{gcd}(p,k)=1}}\frac{\mu(k)}{pk^2} \right| &\leq \frac{\frac{6}{\pi^2}}{\sqrt{z}} + \frac{3.12}{z^{3/4}} + \frac{1}{p}\left(\frac{\frac{6}{\pi^2}}{\sqrt{\frac{z}{p}}} + \frac{3.12}{(\frac{z}{p})^{3/4}} \right)\\
        &= \frac{\frac{6}{\pi^2}}{\sqrt{z}} + \frac{3.12}{z^{3/4}} + \frac{\frac{6}{\pi^2}}{\sqrt{zp}} + \frac{3.12}{z^{3/4}p^{1/4}}.
    \end{align*}
    Combining this result with the first inequality proves the lemma.
\end{proof}
\noindent In a similar manner to $V(z)$, $\sum_{\substack{d \leq z \\ \textup{gcd}(2,d)=1}}{\mu^2(d)}$ becomes infeasible to compute for large values of $z$. We employ a similar technique of approximating $\sum_{\substack{d \leq z \\ \textup{gcd}(2,d)=1}}{\mu^2(d)}$ for $z$ larger than a specific cut-off value $L_3$ (using Proposition~\ref{Thm:R}), and calculating it exactly for the remaining values. We take $L_3=10^7$ for our calculations. 
\begin{lemma}\label{Lemma:r}
    For any $z$, we have
    \begin{align*}
        &\sum_{\substack{d \leq z \\ \textup{gcd}(2,d)=1}}{\mu^2(d)} \leq r(z):= 
        \begin{cases}
            \sum_{\substack{d \leq z \\ \textup{gcd}(2,d)=1}}{\mu^2(d)} &z \leq L_3 \\
            \frac{4}{\pi^2}z + 3.038\sqrt{z} + 5.744z^{1/4} &z>L_3
        \end{cases}.
    \end{align*}
\end{lemma}
\noindent Now we continue with the following.
\begin{theorem}\mbox{}\\
Assuming GRH, for $x \geq 4\cdot 10^{18}$ and $z<x^{\frac{1}{4}}$,
\begin{equation*}
    \sum_{\substack{d_1, d_2 \leq z \\ d_1,d_2 | P_z}}{|E(x;[d_1,d_2])|} \leq r^2(z)c_\pi(x)\sqrt{x}\log{x}
\end{equation*}
where 
\begin{align*}
    &c_\pi(x) := \frac{3}{8\pi} + \frac{6+1/\pi}{4\log{x}} + \frac{6}{\log^2{x}} + \frac{\textup{li}_2}{\sqrt{x}\log{x}}.
\end{align*}
\end{theorem}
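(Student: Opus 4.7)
The strategy is to establish a uniform-in-$d$ explicit GRH bound on $|E(x;d)|$, and then peel it off the double sum, reducing to the count of squarefree divisors of $P_z$ bounded by $z$.

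First, I would observe that since $d_1, d_2 \mid P_z$, both are squarefree, so $[d_1,d_2] \leq d_1 d_2 \leq z^2$. The hypothesis $z < x^{1/4}$ then forces $[d_1,d_2] < \sqrt{x}$, placing every modulus in the sum well inside the range where GRH-based explicit estimates for primes in arithmetic progressions apply.

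The crux of the argument, and the expected main obstacle, is the uniform bound
$$|E(x;d)| \leq c_\pi(x)\sqrt{x}\log x, \qquad d \leq \sqrt{x},\ x \geq 4\cdot 10^{18}.$$
My plan is to start from a Schoenfeld-style explicit GRH bound on $|\psi(x;d,a) - x/\varphi(d)|$ (a constant multiple of $\sqrt{x}(\log x)^2$, uniform in $d$ up to a power of $x$), reduce to $\theta(x;d,a)$ at the cost of an $O(\sqrt{x})$ prime-power term, and then pass to $\pi(x;d,a)$ by Abel summation against $1/\log t$. The endpoint term $\theta(x;d,a)/\log x$ supplies one copy of $\sqrt{x}\log x/(8\pi)$, while the integral $\int_2^x \theta(t;d,a)/(t\log^2 t)\,dt$ contributes additional $\sqrt{x}\log x$-sized pieces after a careful comparison with the partial integral representation of $\text{li}(x)$; collecting these produces the leading coefficient $3/(8\pi)$ of $c_\pi(x)$. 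The subleading terms $(6+1/\pi)/(4\log x)$ and $6/\log^2 x$ arise from tracking the Abel-summation error precisely, while the final correction $\text{li}_2/(\sqrt{x}\log x)$ accounts for the fact that $\text{li}(x)$ is truncated at $2$ in the definition of $E(x;d)$ rather than starting from $0$. The assumption $x \geq 4\cdot 10^{18}$ is there to ensure the verification thresholds for the underlying $\psi$-bound, together with all lower-order asymptotic corrections, are safely dominated.

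Granted this uniform estimate, the rest is routine. Since the bound is independent of $d$, I would conclude
$$\sum_{\substack{d_1, d_2 \leq z \\ d_1, d_2 \mid P_z}} |E(x;[d_1,d_2])| \leq c_\pi(x)\sqrt{x}\log x \cdot \Bigl(\sum_{\substack{d \leq z \\ d \mid P_z}} 1\Bigr)^2.$$
The inner sum counts odd squarefree integers up to $z$, i.e.\ $\sum_{d \leq z,\ \gcd(d,2)=1} \mu^2(d)$, which is bounded by $r(z)$ via Lemma 2.2 applied with $p = 2$. Squaring yields the factor $r^2(z)$ claimed in the theorem.
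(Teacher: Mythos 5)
Your argument matches the paper's in structure: observe $[d_1,d_2]\le d_1d_2\le z^2<\sqrt{x}$, pull a uniform GRH bound $|E(x;d)|\le c_\pi(x)\sqrt{x}\log x$ out of the double sum, square the inner count $\sum_{d\le z,\,d\mid P_z}1=\sum_{d\le z,\,2\nmid d}\mu^2(d)$, and apply Lemma 2.2 with $p=2$ to obtain $r^2(z)$. The one place you diverge is that the paper does not derive the uniform bound on $|E(x;d)|$ at all but simply cites it as Lemma 5.1 of Johnston and Starichkova (2025); your Schoenfeld-to-$\theta$-to-$\pi$ sketch is a plausible outline of how such a lemma would be proved, but it is replacing a citation rather than following a different route.
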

\begin{proof}
    We will use Lemma 5.1 \cite{Johnston2025} to bound under GRH, noting that for $z<x^{\frac{1}{4}}$, $[d_1,d_2]<d_1d_2 < z^2 < \sqrt{x}$. Thus, for $x \geq 4\cdot 10^{18}$ and $z<x^{\frac{1}{4}}$, we have
    \begin{align*}
        \sum_{\substack{d_1, d_2 \leq z \\ d_1,d_2 | P_z}}{|E(x;[d_1,d_2])|} &< \sum_{\substack{d_1, d_2 \leq z \\ d_1,d_2 | P_z}}{c_\pi(x)\sqrt{x}\log{x}}  \\ 
        &= c_\pi(x)\sqrt{x}\log{x}\sum_{\substack{d_1, d_2 \leq z \\ d_1,d_2 | P_z}}{1} \\
        &=c_\pi(x)\sqrt{x}\log{x}\left(\sum_{\substack{d \leq z \\ d| P_z}}{1}\right)^2 \\&=c_\pi(x)\sqrt{x}\log{x}\left(\sum_{\substack{d \leq z \\ \gcd(2,d)=1}}{\mu^2(d)}\right)^2 \\
        &=r^2(z)c_\pi(x)\sqrt{x}\log{x},
    \end{align*}
    by applying Lemma~\ref{Lemma:r} on the second last line.
\end{proof}

\section{Upper Bound on $B$} 
Using Lemma 6 \cite{Platt2020}, we have that
\begin{align*}
    B(2,4\cdot 10^{18})\leq 1.840518.
\end{align*}
Now, using our bound on $\pi_2(x)$, we can apply partial summation to calculate $B(m_1,\infty)$. Let $z_i$ be fixed for each interval $[m_i,m_{i+1}]$ (the values of each $z_i$ and the intervals $[m_i,m_{i+1}]$ will be determined later).
\begin{lemma}
    Assuming GRH, for $m_1 \geq 4\cdot 10^{18}$ and $z_i<m_i^{\frac{1}{4}}$,
    \begin{align*}
        B(m_k)-B(m_1) < &2 \sum_{i=1}^{k-1}\left(\frac{c_1(m_i,m_{i+1})}{V(z_i)} + c_\pi(m_i)r^2(z_i)c_2(m_i,m_{i+1}) \right) \\
        &+2\left(\frac{\pi_2(m_k)}{m_k} -\frac{\pi_2(m_1)}{m_1} \right),
    \end{align*}
    where 
    \begin{align*}
        &\textup{li}_{\textup{up}}(x) := \frac{x}{\log{x}}\left(1 + \frac{1}{\log{x}}  + \frac{2}{\log^2{x}} + \frac{7.32}{\log^3{x}} \right) + \frac{\sqrt{x}\log{x}}{8\pi}  + \textup{li}_2, \\
        &\textup{li}_{\textup{low}}(x) := \frac{x}{\log{x}}\left(1 + \frac{1}{\log{x}}  + \frac{2}{\log^2{x}} \right) - \frac{\sqrt{x}\log{x}}{8\pi}  - \textup{li}_2, \\
        & c_1(m_i,m_{i+1}) := \left(\log{\log{m_{i+i}}}- \frac{\textup{li}_{\textup{low}}(m_{i+1}) - \textup{li}_2}{m_{i+1}}\right) \\& \qquad \qquad \qquad \quad -\left(\log{\log{m_{i}}}- \frac{\textup{li}_{\textup{up}}(m_{i}) - \textup{li}_2}{m_{i}}\right),\\
        & c_2(m_i,m_{i+1}) := \left( \frac{-2\log{m_{i+1}}-4}{\sqrt{m_{i+1}}} \right) - \left( \frac{-2\log{m_i}-4}{\sqrt{m_i}} \right).
    \end{align*}
\end{lemma}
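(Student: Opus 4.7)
The plan is to split the tail as a telescoping sum $B(m_k) - B(m_1) = \sum_{i=1}^{k-1} B(m_i, m_{i+1})$ and bound each interval separately. Within a single interval I would first apply $\tfrac{1}{p}+\tfrac{1}{p+2}<\tfrac{2}{p}$ (which produces the outer factor of $2$), and then use Abel's summation to write
\begin{equation*}
\sum_{\substack{m_i < p \leq m_{i+1} \\ p \in P_2}} \frac{1}{p} = \frac{\pi_2(m_{i+1})}{m_{i+1}} - \frac{\pi_2(m_i)}{m_i} + \int_{m_i}^{m_{i+1}} \frac{\pi_2(t)}{t^2}\,dt.
\end{equation*}
Since there is no useful lower bound on $\pi_2$ under GRH alone, I would discard the non-positive term $-\pi_2(m_i)/m_i$, leaving an upper bound in terms of $\pi_2(m_{i+1})/m_{i+1}$ and the integral.

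Next I would invoke Theorem 2.1 at parameter $z_i$ to replace each $\pi_2(t)$ by $\textup{li}(t)/V(z_i) + R(t,z_i)$, splitting the right-hand side into a main piece (involving $\textup{li}/V(z_i)$) and an error piece (involving $R$). For the main piece, integration by parts using $\textup{li}'(t) = 1/\log t$ gives
\begin{equation*}
\int_{m_i}^{m_{i+1}} \frac{\textup{li}(t)}{t^2}\,dt = -\frac{\textup{li}(m_{i+1})}{m_{i+1}} + \frac{\textup{li}(m_i)}{m_i} + \log\log m_{i+1} - \log\log m_i,
\end{equation*}
and adding the boundary contribution $\textup{li}(m_{i+1})/(V(z_i)m_{i+1})$ yields an expression whose $\textup{li}$-values carry mixed signs. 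Substituting the two-sided estimates $\textup{li}_{\textup{low}}(x) \leq \textup{li}(x) \leq \textup{li}_{\textup{up}}(x)$---the lower bound wherever the coefficient of $\textup{li}$ is negative and the upper bound where it is positive---and absorbing the $\textup{li}_2$ offset arising from the paper's truncated definition of $\textup{li}$, I would organise the main piece into exactly $c_1(m_i,m_{i+1})/V(z_i)$.

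For the error piece, Theorem 2.3 supplies $R(t, z_i) \leq r^2(z_i)\, c_\pi(t)\,\sqrt{t}\,\log t$, so that $\int_{m_i}^{m_{i+1}} R(t,z_i)/t^2\,dt \leq r^2(z_i)\int_{m_i}^{m_{i+1}} c_\pi(t)\log t/t^{3/2}\,dt$. Since $c_\pi$ is decreasing, replace $c_\pi(t)$ by $c_\pi(m_i)$; a direct differentiation verifies that $-(2\log t + 4)/\sqrt{t}$ is an antiderivative of $\log t / t^{3/2}$, so the integral evaluates at the endpoints to precisely $c_2(m_i, m_{i+1})$. This yields the error contribution $c_\pi(m_i)\, r^2(z_i)\, c_2(m_i, m_{i+1})$, and summing over $i$ with the factor of $2$ completes the proof.

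The main obstacle is the bookkeeping rather than any single deep step: one must be careful to select $\textup{li}_{\textup{up}}$ versus $\textup{li}_{\textup{low}}$ correctly at each endpoint once integration by parts has introduced mixed signs, track the $\textup{li}_2$ shift consistently so the result slots into the $c_1, c_2$ templates rather than leaving stray remainders, and absorb any residual boundary $R(m_{i+1}, z_i)/m_{i+1}$ term into the integrated $R$-estimate. It is also worth verifying as a prerequisite that $c_\pi$ is monotonically decreasing on $[m_i, m_{i+1}]$ throughout the regime of interest.
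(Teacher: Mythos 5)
Your decomposition diverges from the paper's in a way that matters: you telescope $B(m_k)-B(m_1)=\sum_i B(m_i,m_{i+1})$ \emph{first} and then apply Abel summation on each subinterval, whereas the paper applies Abel summation \emph{once} over the full range $[m_1,m_k]$, discards the single pair of boundary terms $\pi_2(m_k)/m_k - \pi_2(m_1)/m_1$, and only afterwards splits $\int_{m_1}^{m_k}\pi_2(t)/t^2\,dt$ into the per-interval integrals. Your version creates a boundary term at every $m_{i+1}$; you drop the harmless $-\pi_2(m_i)/m_i$ but retain $+\pi_2(m_{i+1})/m_{i+1}$, and this retained term is precisely what prevents you from landing on $c_1$. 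Once you bound $\pi_2(m_{i+1})/m_{i+1}\leq\mathrm{li}(m_{i+1})/(V(z_i)m_{i+1})+R(m_{i+1},z_i)/m_{i+1}$ and add the integration-by-parts output, the $\pm\mathrm{li}(m_{i+1})/m_{i+1}$ terms cancel exactly; there are no ``mixed signs'' left, only the single positive $\mathrm{li}(m_i)/m_i$ term. So your main piece would be $\mathrm{li}_{\mathrm{up}}(m_i)/m_i + \log\log m_{i+1}-\log\log m_i$, which is strictly larger than $c_1(m_i,m_{i+1})$, since $c_1$ benefits from the surviving negative term $-\mathrm{li}_{\mathrm{low}}(m_{i+1})/m_{i+1}$. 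On top of that you pick up a stray $R(m_{i+1},z_i)/m_{i+1}$ not present in $c_\pi(m_i)r^2(z_i)c_2(m_i,m_{i+1})$; you mention ``absorbing'' it into the integrated $R$-estimate, but no such absorption is available without weakening $c_2$.

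The remedy is simply to follow the paper's ordering: do partial summation on $[m_1,m_k]$, discard the global boundary contribution in one step, then chop the single integral $\int_{m_1}^{m_k}\pi_2(t)/t^2\,dt$ at the $m_i$'s so that each subintegral can use its own $z_i$. That way no per-interval boundary terms ever appear, the coefficient of $\mathrm{li}(m_{i+1})$ stays $-1/m_{i+1}$ (so $\mathrm{li}_{\mathrm{low}}$ is used there), the coefficient of $\mathrm{li}(m_i)$ stays $+1/m_i$ (so $\mathrm{li}_{\mathrm{up}}$ is used there), and the result matches $c_1$. The rest of your outline---bounding $R$ via Theorem 2.3, pulling out $c_\pi(m_i)$ by monotonicity, and using the antiderivative $-(2\log t+4)/\sqrt t$ of $\log t/t^{3/2}$ to obtain $c_2$---agrees with the paper and is fine.
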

\begin{proof}
    \begin{align*}
        B(m_k)-B(m_1) &= \sum_{\substack{m_1<p\leq m_k \\ p,p+2 \text{ prime}}}{\left(\frac{1}{p} + \frac{1}{p+2}\right)}\\
        &< 2\sum_{\substack{m_1<p\leq m_k \\ p,p+2 \text{ prime}}}{\frac{1}{p}} \\
        &=2\left(\frac{\pi_2(m_k)}{m_k} -\frac{\pi_2(m_1)}{m_1} + \int_{m_1}^{m_k}{\frac{\pi_2(t)}{t^2}}dt\right) \\
        &=2\sum_{i=1}^{k-1}\left( \int_{m_i}^{m_{i+1}}{\frac{\pi_2(t)}{t^2}}dt \right) + 2\left(\frac{\pi_2(m_k)}{m_k} -\frac{\pi_2(m_1)}{m_1} \right).
    \end{align*}
    The value of $\frac{\pi_2(m_k)}{m_k}$ will cancel in a further calculation, and the value of $\frac{\pi_2(m_1)}{m_1}$ can be calculated exactly using the computations performed by Oliveira e Silva \cite{OliveiraeSilva2015}. For the remaining term, we proceed by
    \begin{align*}
        \hspace*{0pt} 2 \sum_{i=1}^{k-1}&\left( \int_{m_i}^{m_{i+1}}{\frac{\pi_2(t)}{t^2}} dt\right) \\ &< 2 \sum_{i=1}^{k-1}\left( \int_{m_i}^{m_{i+1}}{\frac{\textup{li}(t)}{V(z_i)\cdot t^2}}dt + \int_{m_i}^{m_{i+1}}{\frac{ r^2(z_i)c_\pi(t)\sqrt{t}\log{t}}{t^2}}dt \right).
    \end{align*}
    We now apply some simplifications to these integrals to allow us to compute them analytically. Firstly, note that $V(z_i)$ and $r(z_i)$ are fixed for each interval $[m_i,m_{i+1}]$. Also, note that $c_{\pi}(t)$ is monotonically decreasing, and so $c_{\pi}(t) \leq c_{\pi}(m_i)$ for $x \in [m_i,m_{i+1}]$. And so,
    
    \begin{align*}
        B(m_k)-B(m_i)<&2 \sum_{i=1}^{k-1}\left( V(z_i)^{-1}\int_{m_i}^{m_{i+1}}{\frac{\textup{li}(t)}{t^2}}dt \right. \\&+\left. c_\pi(m_i)r^2(z_i)\int_{m_i}^{m_{i+1}}{\frac{ \sqrt{t}\log{t}}{t^2}}dt \right).
    \end{align*}
    Now, in computing the integral we note that 
    \begin{align*}
        V(z_i)^{-1}\int_{m_i}^{m_{i+1}}{\frac{\text{li}(t)}{t^2}}dt = &V(z_i)^{-1}\left [\left(\log{\log{m_{i+1}}}- \frac{\text{li}(m_{i+1}) - \text{li}_2}{m_{i+1}}\right) \right. \\ & \left. - \left(\log{\log{m_{i}}}- \frac{\text{li}(m_{i}) - \text{li}_2}{m_{i}}\right)\right].
    \end{align*}
    To avoid computing li$(x)$ directly, we use Schoenfeld's bound (6.18) on $|\pi(x)-\text{li}(x)|$ assuming RH \cite{Schoenfeld1976}, and Dusart's bounds on $\pi(x)$ (Theorem 5.1 and Corrollary 5.2) \cite{Dusart2018}, to find (for $x \geq 4 \cdot 10^9$)
    \begin{align*}
        &\text{li}(x) \leq \text{li}_{\text{up}}(x) = \frac{x}{\log{x}}\left(1 + \frac{1}{\log{x}}  + \frac{2}{\log^2{x}} + \frac{7.32}{\log^3{x}} \right) + \frac{\sqrt{x}\log{x}}{8\pi}  + \text{li}_2 \\
        &\text{li}(x) \geq \text{li}_{\text{low}}(x) = \frac{x}{\log{x}}\left(1 + \frac{1}{\log{x}}  + \frac{2}{\log^2{x}} \right) - \frac{\sqrt{x}\log{x}}{8\pi}  - \text{li}_2 .
    \end{align*}
    And so
    \begin{align*}
        \int_{m_i}^{m_{i+1}}{\frac{\text{li}(t)}{t^2}}dt \leq &\left(\log{\log{m_{i+i}}}- \frac{\text{li}_{\text{low}}(m_{i+1}) - \text{li}_2}{m_{i+1}}\right) - \\& \left(\log{\log{m_{i}}}- \frac{\text{li}_{\text{up}}(m_{i}) - \text{li}_2}{m_{i}}\right).
    \end{align*}
    And finally,
    \begin{align*}
        c_\pi(m_i)r^2(z_i)\int_{m_i}^{m_{i+1}}{\frac{ \sqrt{t}\log{t}}{t^2}}dt =&c_\pi(m_i)r^2(z_i) \left[\left( \frac{-2\log{m_{i+1}}-4}{\sqrt{m_{i+1}}} \right) \right. \\ & \left. - \left( \frac{-2\log{m_i}-4}{\sqrt{m_i}} \right)\right].
    \end{align*}
    Combining the above equations gives the desired result.
\end{proof}

\begin{lemma}
    \begin{equation}
        B - B(m_k) < \frac{16 \pi_2}{\log{m_k}} + \frac{8}{\sqrt{m_k}}.
    \end{equation}
\end{lemma}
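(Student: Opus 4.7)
The approach is partial summation, fed by the Selberg-sieve bound of Theorem 2.1. Starting from the trivial inequality $\frac{1}{p} + \frac{1}{p+2} < \frac{2}{p}$ and Abel summation, the tail reduces to
$$B - B(m_k) < 2 \int_{m_k}^\infty \frac{\pi_2(t)}{t^2}\,dt,$$
where one discards the non-positive boundary term at $t = m_k$ and uses $\pi_2(t)/t \to 0$ (a consequence of any non-trivial upper bound on $\pi_2$) to kill the boundary term at infinity.

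For the pointwise control on $\pi_2(t)$, I would apply Theorem 2.1 with a $t$-dependent sieve level $z = z(t)$. The target constant $16\pi_2$ essentially forces the choice $z(t) \approx t^{1/4}$: combined with the asymptotic lower bound $V(z) \geq \log z/(2\pi_2)$ from Lemma 2.1, this gives $\text{li}(t)/V(z(t)) \leq 8\pi_2 \,\text{li}(t)/\log t$. Substituting the explicit upper bound $\text{li}(t) \leq \text{li}_{\text{up}}(t)$ and integrating term by term, the dominant $t/\log t$ piece contributes
$$2\int_{m_k}^\infty \frac{8\pi_2}{t \log^2 t}\,dt = \frac{16\pi_2}{\log m_k},$$
while the $\sqrt{t}\log t/(8\pi)$ correction inside $\text{li}_{\text{up}}$ contributes $\frac{C}{\sqrt{m_k}}$ with $C = \frac{4\pi_2}{\pi} < 1$, which is absorbed into the $\frac{8}{\sqrt{m_k}}$ remainder together with further lower-order terms.

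The main obstacle is controlling the sieve remainder $R(t, z(t))$. At the critical level $z(t) = t^{1/4}$, Theorem 2.2 only gives $R \ll r(z)^2 \sqrt{t} \log t \ll t \log t$, so $\int_{m_k}^\infty R(t,z(t))/t^2\,dt$ diverges logarithmically and cannot be used naively. The fix I would employ is to sieve at a slightly smaller level, for instance $z(t) = t^{1/4}/(\log t)^\alpha$ with $\alpha > 1$: this preserves $\log z(t) = \tfrac{1}{4}\log t\,(1+o(1))$, so the main constant $16\pi_2$ is unaffected to leading order, while shrinking $r(z(t))^2$ by a factor of $(\log t)^{-2\alpha}$ so that the remainder integral converges and is of order $(\log m_k)^{-(2\alpha-2)}$, dominated by $\frac{8}{\sqrt{m_k}}$ for $m_k$ large enough. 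A careful accounting of the $1/\sqrt{m_k}$-type contributions (from $\text{li}_{\text{up}}$, from $R$, and from the $\log\log t$ perturbation of $V(z(t))$), together with numerical values of the twin prime constant $\pi_2$, then verifies that the total remainder is bounded by $\frac{8}{\sqrt{m_k}}$, yielding the claimed inequality.
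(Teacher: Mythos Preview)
Your reduction to $2\int_{m_k}^\infty \pi_2(t)\,t^{-2}\,dt$ is fine, and you correctly diagnose why $z(t)=t^{1/4}$ exactly makes the sieve remainder integral diverge. The fatal slip is the sentence ``the remainder integral converges and is of order $(\log m_k)^{-(2\alpha-2)}$, dominated by $\frac{8}{\sqrt{m_k}}$ for $m_k$ large enough.'' This comparison goes the wrong way: for every fixed $\alpha$ and every $c>0$ one has $(\log m_k)^{-c} \gg m_k^{-1/2}$ as $m_k\to\infty$. The same objection applies to the $\log\log t/\log t$ perturbation of $V(z(t))$ that you mention at the end: integrating it produces a term of size roughly $(\log\log m_k)/\log^2 m_k$, again far larger than $8/\sqrt{m_k}$. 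So the approach via Theorem~2.1 with a moving $z(t)$ can at best yield
\[
B-B(m_k)\ <\ \frac{16\pi_2}{\log m_k}\;+\;\frac{C}{(\log m_k)^{\beta}}
\]
for some $\beta>1$, which is \emph{not} the inequality stated in the lemma; the $8/\sqrt{m_k}$ is unreachable by this route.

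The paper does something entirely different and much shorter: it imports Lemma~3 of Platt and Trudgian, an unconditional estimate that already packages the tail as
\[
B-B(m_k)\ \le\ \int_{m_k}^{\infty}\!\left(\frac{16\pi_2}{t\log^2 t}+4t^{-3/2}\right)dt,
\]
and then just evaluates the two elementary integrals to obtain $\frac{16\pi_2}{\log m_k}+\frac{8}{\sqrt{m_k}}$. No GRH, no variable sieve level, no optimisation --- the specific shape of the error term, including the $t^{-3/2}$ piece, comes from the cited external result rather than from the sieve machinery developed in Section~2.
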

\begin{proof}
    We use Lemma 3 \cite{Platt2020} to bound the tail end of $B$, so that
    \begin{align*}
        B - B(m_k) &\leq -2 \frac{\pi_2(m_k)}{m_k} + \int_{m_k}^{\infty}{\frac{32 \pi_2}{t\log^2{t}} + 4t^{-\frac{3}{2}}dt}\\&=-2 \frac{\pi_2(m_k)}{m_k} +\frac{32 \pi_2}{\log{m_k}} + \frac{8}{\sqrt{m_k}}.
    \end{align*}
\end{proof}
\noindent Combining the above two lemmas with Lemma 6 \cite{Platt2020} gives the following result.
\begin{theorem}
    \begin{align*}
        B \leq &1.840518 + 2 \sum_{i=1}^{k-1}\left(\frac{c_1(m_i,m_{i+1})}{V(z_i)} + c_\pi(m_i)r^2(z_i)c_2(m_i,m_{i+1}) \right) + \frac{32 \pi_2}{\log{m_k}} \\& + \frac{8}{\sqrt{m_k}} -2\frac{\pi_2(m_1)}{m_1}.
    \end{align*}
\end{theorem}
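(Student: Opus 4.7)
The plan is to decompose $B$ into three pieces corresponding to the three regimes handled earlier in this section, and then add the bounds already established. With $m_1 = 4\cdot 10^{18}$, the telescoping identity
\[
B = B(m_1) + \bigl(B(m_k) - B(m_1)\bigr) + \bigl(B - B(m_k)\bigr)
\]
holds trivially, so the task reduces to bounding each summand in turn.

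For the initial segment $B(m_1)$, I would quote Lemma 6 of Platt and Trudgian directly, giving the explicit numerical bound $B(4\cdot 10^{18}) \leq 1.840518$ already recorded at the start of Section 3. For the middle piece $B(m_k) - B(m_1)$ I would invoke Lemma 3.1, which supplies precisely the double sum of $c_1(m_i,m_{i+1})/V(z_i)$ and $c_\pi(m_i)\,r^2(z_i)\,c_2(m_i,m_{i+1})$ terms appearing in the theorem. For the tail $B - B(m_k)$ I would apply Lemma 3.2, contributing the final $\frac{16\pi_2}{\log m_k} + \frac{8}{\sqrt{m_k}}$ terms. Summing these three bounds yields exactly the claimed inequality.

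The main obstacle, such as it is, lies not in analysis but in ensuring that the hypotheses propagate correctly across the three assembled pieces. Lemma 3.1 implicitly depends on Theorem 2.2, which requires $m_i \geq 4\cdot 10^{18}$ and $z_i < m_i^{1/4}$ for every subinterval $[m_i, m_{i+1}]$. Since $m_1 = 4\cdot 10^{18}$ and the sequence $(m_i)$ is chosen increasing, the size condition on $m_i$ is automatic, while the constraint $z_i < m_i^{1/4}$ becomes a side constraint on the parameter selection carried out in Section 4. Once these hypotheses are verified, the proof reduces to a one-line addition of the three bounds with no further estimation required.
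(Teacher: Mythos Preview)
Your proposal is correct and matches the paper's own argument exactly: the paper simply states that the theorem follows by combining Lemma~3.1, Lemma~3.2, and Lemma~6 of Platt and Trudgian, which is precisely the three-piece decomposition you describe. Your additional remarks about checking the hypotheses $m_i \geq 4\cdot 10^{18}$ and $z_i < m_i^{1/4}$ are a useful clarification that the paper leaves implicit.
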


\section{Calculating $B$}
The final step is to determine the optimal parameters for the theorem. This involves selecting appropriate intervals $[m_i,m_{i+1}]$, and choosing corresponding values of $z_i$ for each interval. Generally, increasing the number of intervals reduces the overall error, though with diminishing returns as the interval size decreases. Based on this principle, we set $m_i \in \{4 \cdot 10^{18}\} \cup \{10^i: i = 19, 19.2, ..., 1999.8, 2000\}$. To determine $z_i$, we employ the Optim \cite{Mogensen2018} package in Julia to optimize $z_i$ within a specified domain. Since the optimal value of $z_i$ increases with $i$, we can restrict the search domain accordingly.
Finally, all of the computations for $B$ were performed using the IntervalArithmetic package in Julia \cite{Sanders2014}, ensuring rigorous explicit bounds.

A subset of the optimized parameter ranges is presented in Table~\ref{tab:basic}. The full set of intermediate calculations, along with the optimized values of $z_i$ for each interval, is available in the linked GitHub repository\footnote{\url{https://github.com/LachlanJDunn/B_Under_GRH}}. This repository also includes the functions used to compute $B$ and perform the parameter sweep. Additionally, it provides a method for estimating bounds on $B(a,b)$ for any $4\cdot10^{18} \leq m_1 \leq a < b \leq m_k$, by computing $B(m_i,m_j)$ for the smallest interval that fully contains $[a,b]$.

\begin{table}[h]
    \centering
    \begin{tabular}{|c|c|c|c|}
        \hline
        Interval & $B(m_i,m_{i+1})$ & Range of $z_i$ values\\ 
        \hline
        $[4\cdot 10^{18},10^{20}]$ & $0.0256$ & $[725,1263]$\\ 
        $[10^{20},10^{25}]$ & $0.0662$ & $[1377, 13948]$\\ 
        $[10^{25},10^{30}]$ & $0.0423$ & $[15598,173542]$\\  
        $[10^{30},10^{40}]$ & $0.0506$ & $[191170,3.02\cdot10^7]$\\
        $[10^{40},10^{50}]$ & $0.0290$ & $[3.35\cdot10^7,6.01\cdot10^9]$\\
        $[10^{50},10^{100}]$ & $0.0547$ & $[6.69\cdot10^9,4.53\cdot10^{21}]$\\
        $[10^{100},10^{2000}]$ & $0.0471$ &  $[5.06\cdot10^{21},1.04\cdot10^{494}]$ \\
        \hline
    \end{tabular}
    \caption{$B(m_i,m_{i+1})$ for chosen intervals (rounded to 4 significant figures)}
    \label{tab:basic}
\end{table}

\section{Potential Improvements}
We conclude by discussing several potential improvements to the presented method of bounding $B$. 
\begin{itemize}
    \item Combining different bounds: as discussed earlier, splicing together different bounds for $B$ over disjoint intervals could provide stronger estimates than restricting to only one method. To facilitate this, the intermediate sums computed in this paper are made available.
    \item Evaluating $B(x_0)$ at a higher $x_0$: computing $B(x_0)$ for a larger $x_0$ would reduce approximation errors, leading to a more precise bound. 
    \item Further parameter optimisation: refining the choices of parameters $\{m_i\}$ and $\{z_i\}$ could potentially yield a more accurate result.
\end{itemize}

\section{Acknowledgements}
This work was completed during and funded by the School of Mathematics and Physics' Summer Research program at the University of Queensland. Firstly, I would like to thank Adrian Dudek for supervising me throughout the program, and for all his help during and after the program ended. I would also like to thank Daniel Johnston and Dave Platt for their very helpful discussions, and the referee for their comments.

\bibliographystyle{abbrvurl}
\bibliography{library}

\begin{thebibliography}{10}

\bibitem{Brun1919}
V.~Brun.
\newblock La série 1/5 + 1/7 + 1/11 + 1/13 + 1/17 + 1/19 + 1/29 + 1/31 + 1/41
  + 1/43 + 1/59 + 1/61 + ... où les dénominateurs sont nombres premiers
  jumeaux est convergente ou finie.
\newblock {\em Bull. Sci. Math.}, 43:124--128, 1919.

\bibitem{Cohen1985}
H.~Cohen and F.~Dress.
\newblock Estimations numériques du reste de la fonction sommatoire relative
  aux entiers sans facteur carré.
\newblock In {\em Publications mathématiques d’Orsay: Colloque de théorie
  analytique des nombres}, pages 73--76, 1985.

\bibitem{Crandall2006}
R.~Crandall and C.~Pomerance.
\newblock {\em Prime Numbers: A Computational Perspective}.
\newblock Springer Science \& Business Media, second edition, 2006.

\bibitem{Dusart2018}
P.~Dusart.
\newblock Explicit estimates of some functions over primes.
\newblock {\em Ramanujan J.}, 45:227--251, 10 2018.
\newblock \href {https://doi.org/10.1007/s11139-016-9839-4}
  {\path{doi:10.1007/s11139-016-9839-4}}.

\bibitem{OliveiraeSilva2015}
T.~O. e~Silva.
\newblock Tables of values of pi(x) and of pi2(x), 2015.
\newblock URL: \url{http://sweet.ua.pt/tos/primes.html}.

\bibitem{Halberstam1974}
H.~Halberstam and H.~E. Richert.
\newblock {\em Sieve Methods}.
\newblock Academic Press, 1974.

\bibitem{Johnston2025}
D.~R. Johnston and V.~Starichkova.
\newblock Some explicit results on the sum of a prime and an almost prime.
\newblock {\em arXiv:2208.01229}, 2025.
\newblock URL: \url{https://arxiv.org/abs/2208.01229}.

\bibitem{DominicKlyve2007}
D.~Klyve.
\newblock {\em Explicit Bounds on Twin Primes and Brun's Constant}.
\newblock PhD thesis, Dartmouth College, 5 2007.

\bibitem{Mogensen2018}
P.~K. Mogensen and A.~N. Riseth.
\newblock Optim: A mathematical optimization package for {Julia}.
\newblock {\em J. Open Source Softw.}, 3:615, 2018.
\newblock \href {https://doi.org/10.21105/joss.00615}
  {\path{doi:10.21105/joss.00615}}.

\bibitem{Nicely2010}
T.~R. Nicely.
\newblock Prime constellations research project, 2010.
\newblock Link no longer available as of May 2025.
\newblock URL: \url{http://www.trnicely.net/counts.html}.

\bibitem{Platt2020}
D.~Platt and T.~Trudgian.
\newblock Improved bounds on {Brun's} constant.
\newblock In D.~H. Bailey, N.~S. Borwein, R.~P. Brent, R.~S. Burachik, J.~anne
  Heather~Osborn, B.~Sims, and Q.~J. Zhu, editors, {\em From Analysis to
  Visualization}, pages 395--406. Springer International Publishing, 3 2020.
\newblock \href {https://doi.org/10.1007/978-3-030-36568-4_25}
  {\path{doi:10.1007/978-3-030-36568-4_25}}.

\bibitem{Sanders2014}
D.~P. Sanders and L.~Benet.
\newblock {IntervalArithmetic.jl}, 2014.
\newblock URL: \url{https://github.com/JuliaIntervals/IntervalArithmetic.jl},
  \href {https://doi.org/10.5281/zenodo.3336308}
  {\path{doi:10.5281/zenodo.3336308}}.

\bibitem{Schoenfeld1976}
L.~Schoenfeld.
\newblock Sharper bounds for the {Chebyshev} functions $\theta(x)$ and
  $\psi(x)$. {II}.
\newblock {\em Math. Comp.}, 30:337--360, 1976.
\newblock \href {https://doi.org/10.2307/2005976} {\path{doi:10.2307/2005976}}.

\end{thebibliography}

\end{document}